\newtheorem{theorem}{Theorem}[section]
\newtheorem{corollary}[theorem]{Corollary}
\theoremstyle{definition}
\newtheorem{definition}[theorem]{Definition}
\theoremstyle{remark}
\numberwithin{equation}{section}
\title[Extremal Metrics]{Extremal Eigenvalues Of The Conformal Laplacian Under Sire-Xu Normalization}
\author{Samuel P\'{e}rez-Ayala}
\address{Department of Mathematics \\
         University of Notre Dame\\
         Notre Dame, IN 46556}
\email{\href{sperezay@nd.edu}{sperezay@nd.edu}}
\begin{document}

\begin{abstract}
Let $(M^n,g)$ be a closed Riemannian manifold of dimension $n\ge 3$. We study the variational properties of the $k$-th eigenvalue functional $\tilde g\in[g] \mapsto \lambda_k(L_{\tilde g})$ under a non-volume normalization proposed by Sire-Xu. We discuss necessary conditions for the existence of extremal eigenvalues under such normalization. Also, we discuss the general existence problem when $k=1$.
\end{abstract}

\maketitle

\section{Introduction}\label{Introduction}

Let $(M^n,g)$ be a closed (compact, no boundary) Riemannian manifold of dimension $n\ge 3$, equipped with a smooth Riemannian metric $g$. We consider the metric dependent operator known as the Conformal Laplacian defined by
\begin{equation}
L_g := -\Delta_g + c_nR_g.
\end{equation}
Here $c_n= \frac{n-2}{4(n-1)}$ and $\Delta_g$ is the Laplace-Beltrami operator defined as a negative operator. $L_g$ is conformally covariant in the following sense: if $\tilde g = \mu^{4/(n-2)}g$, then for any $u\in C^\infty(M^n)$ we have
\begin{equation}\label{ConfTrans}
L_{\mu^{\frac{4}{n-2}}g} (u) = \mu^{-\frac{n+2}{n-2}}L_g(\mu u).
\end{equation}
Since $M^n$ is compact, $L_g$ has a discrete spectrum and we denote it by
\begin{equation}
\lambda_1(L_g)<\lambda_2(L_g) \le \lambda_3(L_g) \le \cdots \le \lambda_k(L_g) \rightarrow \infty,
\end{equation}
where each eigenvalue is repeated according to their multiplicity. Notice that the first eigenvalue is always simple. 



There are many conformally invariant quantities associated with $L_g$. First, the sign of $\lambda_1(L_g)$ is conformally invariant. Indeed, the sign of the Yamabe invariant $Y(M^n,[g])$,
\begin{equation}
Y(M^n,[g]):= \inf_{u\in W^{1,2}(M^n,g)}\frac{\int_M uL_gu\;dv_g}{\left(\int_M|u|^{\frac{2n}{n-2}}\;dv_g\right)^{\frac{n-2}{n}}},
\end{equation}
coincides with the sign of $\lambda_1(L_g)$ (\cite{Kazdan}). Secondly, the dimension of $\text{Ker}(L_g)$ is also conformally invariant. This follows from the transformation law (\ref{ConfTrans}).

In this paper we will investigate the variational properties of the eigenvalue functional 
\begin{equation}\label{EigenFunctional}
\tilde g \in [g] \longmapsto \lambda_k(L_{\tilde g})
\end{equation}
under a non-standard normalization proposed by Sire-Xu in \cite{Sire}. Notice that for any positive real number $c>0$, the $k$-th eigenvalue scales as $\lambda_k(L_{cg}) = c^{-1}\lambda_k(L_g)$. This can be observed from (\ref{ConfTrans}). Therefore, in order to study possible ``critical points" of this functional, some sort of normalization is required. The most geometrically natural and standard normalization is done by restricting the $k$-th eigenvalue functional (\ref{EigenFunctional}) to conformal metrics with fixed volume. The study of this functional under such volume constraint is equivalent to study
\begin{equation}
\tilde g \in [g] \longmapsto \lambda_k(L_{\tilde g})\text{Vol}(M^n,\tilde g)^{\frac{2}{n}}.
\end{equation}
If one writes $\tilde g = \mu^{4/(n-2)}g$, this translates to
\begin{equation}\label{VolumeNorm}
\tilde g \in [g] \longmapsto \lambda_k(L_{\tilde g}) \left(\int_M \mu^{\frac{2n}{n-2}}\;dv_g\right)^{\frac{2}{n}}
\end{equation}


For the positive part of the spectrum of $L_g$, Ammann-Jammes proved that the volume normalization is not suitable if one is looking for Riemannian metrics achieving the supremum of the $k$-th eigenvalue functional in conformal classes (\cite{Ammann2}). Specifically, if $\lambda_+(L_g)$ is the first positive eigenvalue for $L_g$, then
\begin{equation}
\Lambda_+(M^n,[g]):=\sup_{\tilde g \in [g]} \lambda_{+}(L_{\tilde g}) \text{Vol}(M,\tilde g)^{\frac{2}{n}} = \infty.
\end{equation}
For instance, if $n\ge 3$, then $\sup_{\tilde g \in [g_r]} \lambda_{1}(L_{\tilde g}) \text{Vol}(\mathbb{S}^n,\tilde g)^{2/n} = \infty$, where $g_r$ is the round metric. This is part of a much more general phenomenon: if $A_g$ is a conformally covariant elliptic operator whose order is less than the dimension of the manifold, and if $A_g$ is invertible on the cylinder $\mathbb{S}^{n-1}\times \mathbb{R}$, then 
\begin{equation}
\bar \Lambda_+(M^n,[g]):=\sup_{\tilde g \in [g]} \lambda_{+}(A_{\tilde g}) \text{Vol}(M,\tilde g)^{\frac{2}{n}} = \infty,
\end{equation}
where $\lambda_+(A_g)$ denotes the first positive eigenvalue of $A_g$ (\cite{Ammann2}). This puts a restriction on the type of operators for which one can try to find extremal eigenvalues by maximizing over conformal classes. 

The main idea by Sire-Xu in \cite{Sire} was to show that there is another way to normalize (\ref{EigenFunctional}) such that the supremum over conformal metrics is finite, even for the positive part of spectrum of $L_g$. Let us define $F_g^k$ by
\begin{equation}
\tilde g = \mu^{\frac{4}{n-2}}g \in [g] \longmapsto F^k_g(\tilde g) := \lambda_k(L_{\tilde g}) \int_M \mu^{\frac{4}{n-2}}\;dv_g.
\end{equation}
Sire-Xu showed that for any metric $g$ on $M^n$, the quantity
\begin{equation}\label{SX-Norm}
\Lambda_k(M^n,g):=\sup_{\tilde g = \mu^{\frac{4}{n-2}}g \in [g]}\lambda_k(L_{\tilde g})\int_M \mu^{\frac{4}{n-2}}\;dv_g,
\end{equation}
 is finite (see Section 6 in \cite{Sire}). Unlike previously considered quantities, $\Lambda_k(M^n,g)$ is not conformally invariant, i.e. it depends on the choice of the metric $g$. However, questions regarding  the existence of maximal metrics for (\ref{SX-Norm}), their regularity, and its geometric meaning could still be asked. 



 In the case of the $k$-th eigenvalue functional under volume normalization,  (\ref{VolumeNorm}), harmonic maps have been found to be in connection with maximal metrics. Let $[g]$ be a conformal class for which $L_g$ has at least two negative eigenvalues and trivial kernel. Gursky and the author proved in \cite{Gursky} that to each generalized metric $\tilde g=\mu^{4/(n-2)}g$ (see Section 2 in \cite{Gursky} for definition) maximizing $(\ref{VolumeNorm})$ with $k=2$, there is a collection of $C^{2,\alpha}$-functions  $\{u_1,\cdots,u_p\}$, each of them solving $L_g(u_i)=\lambda_2(L_{\tilde g})u_i\mu^{4/(n-2)}$, such that $\sum_{i=1}^pu_i^2=\mu^2$. If $p>1$, then $(M^n\setminus \{\mu=0\},\tilde g)$ admits a harmonic map into a sphere. Our first result is a step in the same direction:

\begin{theorem}\label{map-SX}
Let $(M^n,g)$ be a closed Riemannian manifold of dimension $n\ge 3$ and with a unit volume metric $g$. Let $g_e=\mu_e^{4/(n-2)}g$ be a smooth extremal metric for $F_g^k$ (see Definition \ref{Extremal}) with $\int_M\mu_e^{4/(n-2)}\;dv_g=1$. If  either  
\begin{equation}\label{tech}
\lambda_k(L_{g_e}) > \lambda_{k-1}(L_{g_e})\text{\;\; or \;\;}\lambda_k(L_{g_e}) < \lambda_{k+1}(L_{g_e}),
\end{equation}
and $\lambda_k(L_{g_e})\not = 0$, then there exists a finite family $\{u_1,\cdots,u_p\}$ of eigenfunctions associated to $\lambda_k(L_{g_e})$  such that $\sum_{i=1}^pu_i^2\mu_e^2\equiv 1$. In particular, the map $U=(u_1\mu_e,\cdots,u_p\mu_e): M \rightarrow \mathbb{S}^{p-1}$ is well defined. As a consequence, 
\begin{equation}\label{ForEigen}
\lambda_k(L_{g_e}) = -\frac{1}{2}\mu_e^2\Delta_{g_e}(\mu_e^{-2}) + \mu_e^2\sum_{i=1}^p|\nabla_{g_e}u_i|_{g_e}^2 + c_nR_{g_e}.
\end{equation}
Furthermore, if the extremal metric $g_e$ coincides with the background metric $g$, i.e. if $\mu_e=1$, then $(M^n,g_e)$ admits a harmonic map into a sphere.
\end{theorem}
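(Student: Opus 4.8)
The plan is to characterize extremality variationally, then extract the family of eigenfunctions from a convexity/Hahn-Banach argument. First I would set up the variation: write a path of conformal metrics $\tg_t = (\mu_e + t\psi)^{4/(n-2)}g$ for $\psi \in C^\infty(M)$ small, and compute the derivative of $F_g^k(\tg_t)$ at $t=0$. Because $\lambda_k$ may not be simple, this functional is only one-sided differentiable; the standard fact (going back to Kato/Rellich perturbation theory, and used in the Gursky–Pérez-Ayala paper \cite{Gursky}) is that the left and right derivatives are given by maximizing/minimizing a quadratic form over the eigenspace $E_k$ associated to $\lambda_k(L_{g_e})$. Concretely, using the transformation law (\ref{ConfTrans}) and the Rayleigh quotient for $\lambda_k(L_{\tg})\int_M (\mu/\mu_e)^{4/(n-2)}\mu_e^{4/(n-2)}\,dv_g$, the derivative along $\psi$ takes the form $\min_{u\in E_k}$ (resp.\ $\max$) of a linear functional $u \mapsto \int_M Q_u \,\psi\, dv_g$, where $Q_u$ is a quadratic expression in an $L_{g_e}$-eigenfunction $u$ (normalized in the appropriate weighted $L^2$ space). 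The extremality hypothesis, together with condition (\ref{tech}) which guarantees that the relevant derivative of $\lambda_k$ is not contaminated by the neighboring eigenvalue, forces both the left and right derivatives to vanish (or to have opposite signs) for every admissible $\psi$.

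Next I would translate the vanishing of the one-sided derivatives into a convex-geometry statement. The set of functions $\{Q_u : u \in E_k,\ \|u\|=1\}$, or more precisely the convex hull $K$ of the functions $q_u := u^2\mu_e^2 \lambda_k(L_{g_e}) - (\text{terms from } u L_{g_e} u)$ arising in the formula, must contain (or fail to be separated from) the specific function coming from the normalization, which after simplification is the constant function. The key step is a separating-hyperplane argument: if no convex combination $\sum_i t_i q_{u_i}$ equals the required constant, then by Hahn–Banach there is a $\psi$ separating $K$ from that constant, and testing the derivative against this $\psi$ contradicts extremality. Hence there exist $u_1,\dots,u_p \in E_k$ and weights $t_i \ge 0$ with $\sum t_i = 1$ such that the corresponding convex combination reproduces the Euler–Lagrange identity; absorbing the $\sqrt{t_i}$ into the eigenfunctions yields $\sum_{i=1}^p u_i^2 \mu_e^2 \equiv 1$, which is exactly the claimed sum-to-one relation and makes $U=(u_1\mu_e,\dots,u_p\mu_e)$ land in $\mathbb{S}^{p-1}$.

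With the relation $\sum_i u_i^2\mu_e^2 \equiv 1$ in hand, formula (\ref{ForEigen}) should follow by a direct computation. I would sum the eigenvalue equations $L_{g_e}(u_i) = \lambda_k(L_{g_e})u_i$ after multiplying by $u_i$, expand $L_{g_e} = -\Delta_{g_e} + c_nR_{g_e}$, and use the identity $\Delta_{g_e}(\tfrac{1}{2}\sum u_i^2) = \sum u_i\Delta_{g_e}u_i + \sum|\nabla_{g_e}u_i|_{g_e}^2$. Since $\sum u_i^2 = \mu_e^{-2}$, the Laplacian term becomes $\tfrac12\Delta_{g_e}(\mu_e^{-2})$; rearranging and multiplying through by $\mu_e^2$ gives precisely (\ref{ForEigen}). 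The final conclusion is then immediate: if $\mu_e \equiv 1$, then $\mu_e^2\Delta_{g_e}(\mu_e^{-2}) = 0$ and the components $u_i$ themselves satisfy $\sum u_i^2 \equiv 1$ with $\sum|\nabla u_i|^2$ equal to a function depending only on $\lambda_k(L_{g_e})$ and $R_{g_e}$; this tension-field condition is exactly the statement that $U=(u_1,\dots,u_p)$ is a harmonic map into $\mathbb{S}^{p-1}$, since the eigenvalue equations become $-\Delta_{g_e}u_i = (\lambda_k(L_{g_e}) - c_nR_{g_e})u_i = \big(\sum_j|\nabla_{g_e}u_j|^2\big)u_i$, the standard eigenmap characterization of harmonicity.

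The main obstacle I anticipate is the first step: correctly handling the non-smoothness of $\lambda_k$ and justifying the one-sided derivative formula in the non-simple case, including verifying that the perturbation series converges and that condition (\ref{tech}) is exactly what decouples $\lambda_k$ from $\lambda_{k\pm1}$ so that the one-sided derivatives are genuine min/max over $E_k$ rather than a more complicated combination. A secondary subtlety is keeping track of the weighted inner product (with weight $\mu_e^{4/(n-2)}dv_g$) throughout, since the eigenfunctions are orthonormal in the $g_e$-metric, not in $g$, and the normalization $\int_M\mu_e^{4/(n-2)}dv_g = 1$ must be used consistently so that the separating-hyperplane target really is the constant function.
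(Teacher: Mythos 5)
Your proposal follows essentially the same route as the paper's proof: one-sided derivatives of $\lambda_k$ via Rellich--Kato--Canzani perturbation theory, a Hahn--Banach separation argument showing that the constant function $1$ lies in the (Carath\'eodory-compact) convex hull of $\{u^2\mu_e^2 : u\in E_k(L_{g_e}),\ \|u\|_{L^2(g_e)}=1\}$, absorption of the convex weights into the eigenfunctions, and then the same direct Laplacian computation and eigenmap characterization of harmonicity. The only cosmetic difference is that the paper cleans up the volume term in the derivative of $F_g^k$ by choosing the generating function $\mu_t=e^{tw\mu_e^2}$ with $w$ of zero $g_e$-mean, so the separation is literally between $\{1\}$ and $\mathrm{Conv}(K)$, whereas you fold that normalization into the linear functional being separated; these are equivalent.
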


Notice that for any metric $\tilde g\in[g]$, the first eigenvalue $\lambda_1(L_{\tilde g})$ always satisfies the second condition in (\ref{tech}). As a consequence of Theorem \ref{map-SX}, we derive a necessary condition for the existence of extremal metrics for $F_g^1$: the sign of $R_g$ and $Y(M^n,[g])$ need to coincide. Corollary \ref{nec1-SX-P} discusses the result in the case when $Y(M^n,[g])>0$, but the same holds in the case of negative Yamabe invariant; see Corollary \ref{nec1-SX-N}.

\begin{corollary}\label{nec1-SX-P}
Suppose $[g]$ is a conformal class for which $Y(M^n,[g])>0$, and assume that $g_e=\mu_e^{4/(n-2)}g$ is an extremal metric for $F^1_g$ with $\int_M\mu_e^{4/(n-2)}\;dv_g = 1$. Then 
\begin{equation}\label{nec1-SX-P1}
 c_nR_g = \lambda_1(L_{g_e})\mu_e^{\frac{4}{n-2}}.
\end{equation}
In particular, the scalar curvature $R_g$ with respect to the background metric $g$ is positive everywhere.
\end{corollary}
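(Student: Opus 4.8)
The plan is to apply Theorem \ref{map-SX} with $k=1$ and then exploit the simplicity of the first eigenvalue to pin down the eigenfunction explicitly. First I would verify the hypotheses of Theorem \ref{map-SX}. Since the first eigenvalue is always simple, $\lambda_1(L_{g_e}) < \lambda_2(L_{g_e})$, so the second alternative in (\ref{tech}) holds automatically for $k=1$. Moreover, because $Y(M^n,[g])>0$ and the sign of the first eigenvalue is conformally invariant and agrees with that of the Yamabe invariant, we have $\lambda_1(L_{g_e})>0$, in particular $\lambda_1(L_{g_e})\neq 0$. Thus Theorem \ref{map-SX} produces a family $\{u_1,\dots,u_p\}$ of $\lambda_1(L_{g_e})$-eigenfunctions satisfying $\sum_{i=1}^p u_i^2\mu_e^2\equiv 1$.

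Second, I would collapse this family to a single function. As $\lambda_1(L_{g_e})$ is simple, its eigenspace is one-dimensional, so every $u_i$ is a scalar multiple of the positive ground-state eigenfunction; consequently $\sum_i u_i^2$ is a nonnegative constant multiple of the square of that eigenfunction, and the constraint $\sum_i u_i^2\mu_e^2\equiv 1$ forces, after normalization and a choice of sign, that $u_1=\mu_e^{-1}$. In other words, $\mu_e^{-1}$ is exactly the first eigenfunction of $L_{g_e}$:
\[
L_{g_e}(\mu_e^{-1}) = \lambda_1(L_{g_e})\,\mu_e^{-1}.
\]

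Third, I would transport this equation to the background metric through the covariance law (\ref{ConfTrans}). Writing $g_e=\mu_e^{4/(n-2)}g$ and applying (\ref{ConfTrans}) to the function $u=\mu_e^{-1}$ gives $L_{g_e}(\mu_e^{-1}) = \mu_e^{-(n+2)/(n-2)}L_g(1)$, and since $\Delta_g(1)=0$ we have $L_g(1)=c_nR_g$. Hence the eigenvalue equation becomes $\mu_e^{-(n+2)/(n-2)}c_nR_g = \lambda_1(L_{g_e})\mu_e^{-1}$, and multiplying through by $\mu_e^{(n+2)/(n-2)}$ yields the identity (\ref{nec1-SX-P1}). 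The positivity of $R_g$ is then immediate: $c_n>0$ for $n\ge 3$, the smooth conformal factor satisfies $\mu_e^{4/(n-2)}>0$, and $\lambda_1(L_{g_e})>0$ because $Y(M^n,[g])>0$; therefore the right-hand side of (\ref{nec1-SX-P1}) is strictly positive everywhere.

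I do not anticipate a serious obstacle, since the substantive content is already carried by Theorem \ref{map-SX}; the only delicate point is the reduction in the second step, where one must justify that a simple eigenvalue forces the family to be effectively a single function and that the sign can be fixed so that $u_1=+\mu_e^{-1}$ rather than merely $|u_1|=\mu_e^{-1}$. This uses both the simplicity of $\lambda_1(L_{g_e})$ and the standard fact that the ground state of the Schr\"odinger-type operator $L_{g_e}$ does not change sign.
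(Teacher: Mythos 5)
Your proposal is correct and follows essentially the same route as the paper: invoke Theorem \ref{map-SX} for $k=1$ (using simplicity of $\lambda_1$ to verify (\ref{tech}) and to reduce the family to $u^2=\mu_e^{-2}$), then apply the covariance law (\ref{ConfTrans}) to $u=\pm\mu_e^{-1}$ and solve for $c_nR_g$. The only cosmetic difference is your worry about fixing the sign of $u_1$; this is immaterial since the eigenvalue equation is linear, so $u=-\mu_e^{-1}$ yields the same identity, exactly as the paper's $\pm$ bookkeeping shows.
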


We also solve the existence problem of maximal metrics for the first eigenvalue functional under Sire-Xu normalization, i.e. we solve the maximization problem for $F_g^1$. In fact, the condition of both $R_g$ and $Y(M^n,[g])$ having the same sign is also sufficient. Theorem \ref{Existence1-SX-P} discusses the case when $Y(M^n,[g])>0$, but the same conclusions hold in the case of negative Yamabe invariant; see Theorem \ref{Existence1-SX-N}.

\begin{theorem}\label{Existence1-SX-P}
Let M be a closed n-dimensional manifold endowed with a conformal class [g] satisfying $Y(M,[g])>0$. If $R_g>0$, then the metric 
\begin{equation}g_{\text{\tiny max}} := \mu_{\text{\tiny max}}^{\frac{4}{n-2}}g = \left(\frac{R_g}{\int_MR_gdv_g}\right)g
\end{equation}
solves the maximization problem for $\Lambda_1(M,g)$, i.e. $F_g^1(g_{\text{\tiny max}}) = \Lambda_1(M,g)$. 
\end{theorem}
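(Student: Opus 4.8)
The plan is to reformulate $F_g^1$ as a weighted Rayleigh quotient on the fixed background $(M,g)$ and then exhibit a universal upper bound that is saturated by $g_{\text{\tiny max}}$. Writing $\tilde g = \mu^{4/(n-2)}g$ and invoking the conformal covariance (\ref{ConfTrans}), a function $\phi$ is an eigenfunction of $L_{\tilde g}$ with eigenvalue $\lambda$ if and only if $\psi := \mu\phi$ satisfies the weighted equation $L_g\psi = \lambda\,\mu^{4/(n-2)}\psi$ on $(M,g)$. Carrying the exponents through $dv_{\tilde g} = \mu^{2n/(n-2)}dv_g$, the min-max principle gives
\[
\lambda_1(L_{\tilde g}) = \inf_{\psi}\frac{\int_M \psi L_g\psi\,dv_g}{\int_M \psi^2\,\mu^{4/(n-2)}\,dv_g},
\]
so that, setting $w = \mu^{4/(n-2)}$,
\[
F_g^1(\tilde g) = \Big(\int_M w\,dv_g\Big)\,\inf_{\psi}\frac{\int_M \psi L_g\psi\,dv_g}{\int_M \psi^2 w\,dv_g}.
\]

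The first thing I would record is that this expression is invariant under the scaling $w\mapsto cw$ ($c>0$): the prefactor scales by $c$ while the infimum scales by $c^{-1}$. Hence I may normalize $\int_M w\,dv_g = 1$, and the problem becomes $\Lambda_1(M,g) = \sup_{w>0,\ \int_M w\,dv_g = 1}\lambda_1(w)$, where $\lambda_1(w)$ denotes the infimum above. The decisive step is to test this infimum with the universal trial function $\psi \equiv 1$. Since $L_g(1) = c_nR_g$, for every admissible weight one gets $\lambda_1(w)\le \int_M L_g(1)\,dv_g \big/ \int_M w\,dv_g = c_n\int_M R_g\,dv_g$, whence $\Lambda_1(M,g)\le c_n\int_M R_g\,dv_g$.

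It remains to verify that $g_{\text{\tiny max}}$ saturates this bound. For $w_{\text{\tiny max}} = R_g\big/\int_M R_g\,dv_g$ one checks directly that $\int_M w_{\text{\tiny max}}\,dv_g = 1$ and that $\psi\equiv 1$ solves the weighted eigenvalue equation $L_g(1) = c_nR_g = \big(c_n\int_M R_g\,dv_g\big)\,w_{\text{\tiny max}}$. Here the hypothesis $R_g>0$ is essential: it makes $w_{\text{\tiny max}}$ a genuine positive conformal factor, so that $w_{\text{\tiny max}}^{-1}L_g$ is a well-posed self-adjoint problem, and it makes $\psi\equiv 1$ a positive eigenfunction. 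The key structural fact I would then invoke is that a positive eigenfunction of such a Schr\"odinger-type problem must be the ground state — otherwise it would have to be orthogonal, in the weighted inner product $\langle u,v\rangle_w = \int_M uvw\,dv_g$, to the positive first eigenfunction, which is impossible for two positive functions against a positive weight. Consequently $\lambda_1(w_{\text{\tiny max}}) = c_n\int_M R_g\,dv_g$ and $F_g^1(g_{\text{\tiny max}}) = c_n\int_M R_g\,dv_g$; combining with the upper bound yields $F_g^1(g_{\text{\tiny max}}) = \Lambda_1(M,g)$. I do not expect a serious obstacle: the only points requiring care are the exponent bookkeeping in the reduction and the justification that $\psi\equiv 1$ is the \emph{first} (not merely some) eigenfunction for $w_{\text{\tiny max}}$, which is precisely where $R_g>0$ enters.
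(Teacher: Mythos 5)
Your proposal is correct and follows essentially the same route as the paper: your trial function $\psi\equiv 1$ in the weighted Rayleigh quotient is exactly the paper's test function $\tilde\mu^{-1}$ for $\lambda_1(L_{\tilde g})$, and your saturation argument (a positive eigenfunction of the weighted problem must be the ground state) is the paper's observation that the eigenfunction $\mu_{\text{\tiny max}}^{-1}$ has constant sign. The only cosmetic difference is that you work throughout in the weighted formulation $L_g\psi=\lambda\,\mu^{4/(n-2)}\psi$ on the fixed background, whereas the paper passes back and forth via the conformal transformation law (\ref{ConfTrans}).
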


Theorems \ref{Existence1-SX-P} and \ref{Existence1-SX-N} provide a generalization of the results found in \cite{Sire}, where the authors showed that the round metric was the unique maximizer for $F^1_{g_r}$ on $\mathbb{S}^n$. With these results the existence problem for $k=1$ is fully understood. The existence theory for higher eigenvalues is still unknown.

The paper is organized as follows. In Section \ref{Perturbation} we recall an important result from classical analytic perturbation theory. In particular, we discuss a result due to Canzani (\cite{Canzani}) which is the base for this work. The techniques employed to prove Theorem \ref{OSD} follow closely those used by Soufi-Ilias in \cite{ElSoufi}. In Section \ref{ExtremalMetrics} is where we proved  our main results, including those analogous to Corollary \ref{nec1-SX-P} and Theorem \ref{Existence1-SX-P} in the case when $Y(M^n,[g])<0$. The proof of Theorem \ref{map-SX} uses classical separation theorems as it was done in \cite{ElSoufi}, \cite{ElSoufi2}.

\section{Background On Analytic Perturbation Theory}\label{Perturbation}

We will introduce a more general notion than that of maximal metrics. Loosely speaking, extremal metrics will be those which are ``critical points" of the functional $\lambda_k(L_{.})$ under the Sire-Xu normalization. As we shall discuss, maximal metrics are always extremal, but the converse may not be true. To formally define what extremal metrics are, we need first to show the existence of the one-sided derivatives of $\lambda_k(L_{.})$ along conformal analytic perturbations. 

To this end, let $g(t)=\mu_t^{4/(n-2)}\tilde g$ be any analytic deformation of $\tilde g \in [g]$, i.e. $\mu_0 \equiv 1$ and $\mu_t$ depends real analytically in $t$. It is good to note that we are only considering deformations that stay in a given conformal class. The function $\mu_t$ is called the \textit{generating function} for the deformation $g(t)$. In general, the eigenvalue functional $\lambda_k(L_{g(t)})$ is continuous, but not differentiable. However, and as we mentioned before, both $\frac{d}{dt}\lambda_k(L_{g(t)})|_{t=0^+}$ and $\frac{d}{dt}\lambda_k(L_{g(t)})|_{t=0^-}$ exist. The existence of the one-sided derivatives relies on the following theorem from perturbation theory for linear operators. The original theory traces back to Rellich-Kato's work, but it was Canzani in \cite{Canzani} who proved that such a theory could be applied to a certain class of conformally covariant operators acting on smooth bundles over $M^n$; see also \cite{Bando}. 
 
\begin{theorem}[Rellich-Kato, Canzani]\label{RKC}
Let $\lambda_k(L_{\tilde g})$ be the $k$-th eigenvalue of the Conformal Laplacian with respect to $\tilde g\in[g]$, and denote by $m$ its multiplicity. Pick any analytic deformation $g(t)=\mu_t^{4/(n-2)}\tilde g$ of $\tilde g$. Then there exist $\Lambda_1(t),\cdots,\Lambda_m(t)$ analytic in $t$, and $u_1(t),\cdots,u_m(t)$ convergent power series in $t$ with respect to the $L^2$ norm topology, such that
\begin{equation}\label{RKC1}
L_{g(t)}u_i(t) = \Lambda_i(t)u_i(t), \text{ with } \Lambda_i(0)=\lambda_k(L_{\tilde g}) \text{ for all } i=1,\cdots,m;
\end{equation}
and
\begin{equation}\label{RKC2}
\int_M u_i(t)u_j(t)\;dv_{g(t)} = \delta_{ij} \text{ for all } i=1,\cdots,m.
\end{equation}
Moreover, if we select positive constants $d_1$ and $d_2$ such that the spectrum of $L_{\tilde g}$ in $(\lambda_k(L_{\tilde g}) -d_1,\lambda_k(L_{\tilde g}) + d_2)$ consists only of $\lambda_k(L_{\tilde g})$, then one can find a small enough $\delta >0$ such that the spectrum of $L_{g(t)}$ in the same interval $(\lambda_k(L_{\tilde g}) -d_1,\lambda_k(L_{\tilde g}) + d_2)$ consists of $\{\Lambda_1(t),\cdots,\Lambda_m(t)\}$ alone for $|t|<\delta$.
\end{theorem}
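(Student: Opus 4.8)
The plan is to convert the $t$-dependent family of eigenvalue problems into a single self-adjoint analytic family of type (A), in Kato's sense, living on one fixed Hilbert space, and then to invoke the classical Rellich–Kato theory. The obstruction to applying that theory verbatim is that both the operator $L_{g(t)}$ and the underlying inner product $\int_M \cdot\; dv_{g(t)} = \int_M \cdot\;\mu_t^{2n/(n-2)}\,dv_{\tilde g}$ move with $t$. I would remove the $t$-dependence of the Hilbert space through the unitary map $W_t\colon L^2(M,dv_{g(t)})\to L^2(M,dv_{\tilde g})$ given by $W_t f = \mu_t^{n/(n-2)} f$, which is an isometry precisely because $dv_{g(t)} = \mu_t^{2n/(n-2)}\,dv_{\tilde g}$. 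Conjugating $L_{g(t)}$ by $W_t$ and applying the covariance law (\ref{ConfTrans}) with base metric $\tilde g$, a short computation with the conformal factors yields
\[
T(t) := W_t\, L_{g(t)}\, W_t^{-1} = \mu_t^{-2/(n-2)}\, L_{\tilde g}\, \mu_t^{-2/(n-2)},
\]
a family of operators on the fixed space $L^2(M,dv_{\tilde g})$ that is unitarily equivalent to $L_{g(t)}$ and hence has the identical spectrum.

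Next I would verify the hypotheses of Kato's theorem for $T(t)$. For $|t|$ small the smooth function $\mu_t^{-2/(n-2)}$ is positive and bounded away from zero, so multiplication by it is bounded, self-adjoint, invertible, and preserves $H^2(M,\tilde g)$; consequently $T(t)$ is self-adjoint on the \emph{fixed} domain $\mathrm{Dom}(T(t)) = H^2(M,\tilde g)$, and its coefficients depend analytically on $t$ because $\mu_t$ does. This is exactly the assertion that $\{T(t)\}$ is a self-adjoint analytic family of type (A), and it is precisely the verification carried out for conformally covariant operators by Canzani in \cite{Canzani}. Since $M$ is compact, $T(0)=L_{\tilde g}$ has discrete spectrum, so $\lambda_k(L_{\tilde g})$ is an isolated eigenvalue of finite multiplicity $m$.

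With type-(A) analyticity in hand I would run the standard contour-integral argument: choose a small circle $\Gamma\subset\mathbb{C}$ enclosing $\lambda_k(L_{\tilde g})$ and no other point of the spectrum of $T(0)$; for $|t|<\delta$ the resolvent $(T(t)-z)^{-1}$ is jointly analytic in $(t,z)$ for $z\in\Gamma$, so the Riesz projection
\[
P(t) = -\frac{1}{2\pi i}\oint_\Gamma (T(t)-z)^{-1}\,dz
\]
is analytic in $t$ and has constant rank $m$. Rellich's theorem for self-adjoint analytic families then produces $m$ eigenvalue branches $\Lambda_1(t),\dots,\Lambda_m(t)$, analytic in the real parameter $t$ (crossings permitted, since self-adjointness forbids complex branch points), together with an analytic orthonormal frame $w_1(t),\dots,w_m(t)$ of $\mathrm{Range}\,P(t)$ in $L^2(M,dv_{\tilde g})$. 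Setting $u_i(t):=W_t^{-1}w_i(t)=\mu_t^{-n/(n-2)}w_i(t)$ transports these to $L_{g(t)}$, giving $L_{g(t)}u_i(t)=\Lambda_i(t)u_i(t)$ with $\Lambda_i(0)=\lambda_k(L_{\tilde g})$; the normalization (\ref{RKC2}) is then automatic from the unitarity of $W_t$, since $\int_M u_i u_j\,dv_{g(t)}=\int_M w_i w_j\,dv_{\tilde g}=\delta_{ij}$, and each $u_i(t)$ is a convergent $L^2(M,dv_{\tilde g})$ power series because $\mu_t^{-n/(n-2)}$ is analytic in $t$ with $\mu_t$ bounded away from $0$. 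The final isolation statement is read off from the same projection: for $\delta$ small the part of the spectrum of $T(t)$, equivalently of $L_{g(t)}$, lying in $(\lambda_k(L_{\tilde g})-d_1,\lambda_k(L_{\tilde g})+d_2)$ consists exactly of $\{\Lambda_1(t),\dots,\Lambda_m(t)\}$, the remaining spectrum staying outside $\Gamma$ by upper semicontinuity.

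I expect the genuinely delicate step to be not the bookkeeping of the conformal factors but the verification that $\{T(t)\}$ is an analytic family of type (A) in the operator-theoretic sense — controlling the analyticity of the resolvent uniformly for $z\in\Gamma$ on a fixed domain — which is where Canzani's extension of Rellich–Kato to this conformal setting does the real work, together with the careful transfer of the $t$-dependent orthonormality through $W_t$ so that (\ref{RKC2}) holds with respect to $dv_{g(t)}$ rather than $dv_{\tilde g}$.
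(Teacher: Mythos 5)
The paper gives no proof of Theorem \ref{RKC}: it is imported as a black box from Rellich--Kato perturbation theory as adapted to conformally covariant operators by Canzani \cite{Canzani}. Your argument --- unitary conjugation by $W_t f=\mu_t^{n/(n-2)}f$ onto the fixed Hilbert space $L^2(M,dv_{\tilde g})$, the (correct) identity $T(t)=\mu_t^{-2/(n-2)}L_{\tilde g}\,\mu_t^{-2/(n-2)}$, verification that this is a self-adjoint analytic family of type (A) on the fixed domain $H^2$, and then Rellich's theorem plus the Riesz projection for the isolation statement --- is precisely the standard proof those references supply, so the proposal is correct and takes essentially the same (delegated) route as the paper.
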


We find it important to notice what the theorem {\bf does not} assert. For a conformal metric $\tilde g\in[g]$, given an eigenvalue $\lambda_k(L_{\tilde g})$ with multiplicity $m$ and an orthonormal basis $\{u_i\}_{i=1}^m$ of the $k$-th eigenspace $E_k(L_{\tilde g})$, Theorem \ref{RKC} {\bf does not} provide a way to find a perturbation $g(t)=\mu_t^{4/(n-2)}\tilde g$ of $\tilde g$ with a collection of convergent power series $\{u_i(t)\}_{i=1}^m$ satisfying (\ref{RKC1}) and (\ref{RKC2}) such that $u_i(0)=u_i$. In fewer words, orthonormal bases of $E_k(L_{\tilde g})$ cannot be prescribed.

A crucial consequence of Theorem \ref{RKC} is:

\begin{theorem}\label{OSD}
 Let $g(t)=\mu_t^{4/(n-2)}\tilde g$ be any analytic deformation of $\tilde g$, i.e. $g(t)$ is analytic with respect to $t$ in a neighborhood of $t=0$, and $g(0)=\tilde g$ ($\mu_0\equiv1$). Then the one-sided derivatives of $\lambda_k(L_{g(t)})$,
\[
\frac{d}{dt}\lambda_k(L_{g(t)})|_{t=0^+}\text{ and }\frac{d}{dt}\lambda_k(L_{g(t)})|_{t=0^-},
\]
both exist. Moreover, an explicit formula can be computed in certain cases:
\begin{enumerate}[(i)]
\item If $\lambda_k(L_{\tilde g}) > \lambda_{k-1}(L_{\tilde g})$, then 
\begin{equation}\label{OSD+1}
\frac{d}{dt}\lambda_k(L_{g(t)})|_{t=0^+} = \min_{1\le i \le m} \Lambda'_i(0)  
\end{equation}
and
\begin{equation}\label{OSD-1}
\frac{d}{dt}\lambda_k(L_{g(t)})|_{t=0^-} = \max_{1\le i \le m} \Lambda'_i(0).
\end{equation}
\item If $\lambda_k(L_{\tilde g})<\lambda_{k+1}(L_{\tilde g})$, then
\begin{equation}\label{OSD+2}
\frac{d}{dt}\lambda_k(L_{g(t)})|_{t=0^+} = \max_{1\le i \le m} \Lambda'_i(0)
\end{equation}
and
\begin{equation}\label{OSD-2}
\frac{d}{dt}\lambda_k(L_{g(t)})|_{t=0^-} = \min_{1\le i \le m} \Lambda'_i(0).
\end{equation}
\end{enumerate}
In both cases,
\begin{equation}\label{AnalyticEigen}
\Lambda_i'(0) = -\frac{4}{n-2}\lambda_k(L_{\tilde g})\int_M hu_i^2\;dv_{\tilde g}.
\end{equation}
Here the notation is as in Theorem \ref{RKC}, and $h$ denotes $\frac{d}{dt}\mu_t|_{t=0}$.
\end{theorem}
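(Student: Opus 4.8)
The plan is to reduce the whole statement to the behaviour of a finite family of analytic functions, namely the Rellich-Kato-Canzani branches. First I would apply Theorem \ref{RKC} to $\lambda_k(L_{\tilde g})$, whose multiplicity I call $m$, and to the chosen deformation $g(t)$; this produces analytic branches $\Lambda_1(t),\dots,\Lambda_m(t)$ with $\Lambda_i(0)=\lambda_k(L_{\tilde g})$, together with $L^2$-analytic eigenfunctions $u_i(t)$ satisfying (\ref{RKC1})--(\ref{RKC2}), and the spectral separation property. Set $\lambda_{\ast}:=\lambda_k(L_{\tilde g})$ and fix $d_1,d_2>0$ so that $\lambda_{\ast}$ is the only eigenvalue of $L_{\tilde g}$ in $(\lambda_{\ast}-d_1,\lambda_{\ast}+d_2)$. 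Since the eigenvalues of $L_{g(t)}$ depend continuously on $t$, after shrinking $\delta$ the eigenvalues sitting just below and just above the cluster stay outside $(\lambda_{\ast}-d_1,\lambda_{\ast}+d_2)$ for $|t|<\delta$; combined with the separation part of Theorem \ref{RKC}, this shows that for $|t|<\delta$ the family $\{\Lambda_1(t),\dots,\Lambda_m(t)\}$ occupies exactly the eigenvalue slots indexed by the cluster of $\lambda_{\ast}$. Hence $\lambda_k(L_{g(t)})$ is the $j$-th smallest of $\Lambda_1(t),\dots,\Lambda_m(t)$, where $j$ records the position of the index $k$ inside that cluster.

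With this reduction in hand I would settle the existence of the one-sided derivatives in full generality. Each difference $\Lambda_i-\Lambda_\ell$ is real analytic, hence either vanishes identically or has an isolated zero at $t=0$; consequently the branches are totally preordered on a punctured right interval $(0,\delta')$ and, separately, on $(-\delta',0)$. On each such interval the $j$-th order statistic coincides with a single analytic branch, so $\lambda_k(L_{g(t)})$ agrees with an analytic function on $(0,\delta')$ and with an analytic function on $(-\delta',0)$; this immediately yields the existence of $\frac{d}{dt}\lambda_k(L_{g(t)})|_{t=0^+}$ and $\frac{d}{dt}\lambda_k(L_{g(t)})|_{t=0^-}$.

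The explicit formulas then follow from the elementary fact that, for finitely many $C^1$ functions agreeing at $t=0$, one has $\frac{d}{dt}\min_i f_i|_{t=0^+}=\min_i f_i'(0)$ and $\frac{d}{dt}\min_i f_i|_{t=0^-}=\max_i f_i'(0)$, with the roles of $\min$ and $\max$ reversed for a maximum. In case (i) the hypothesis $\lambda_k(L_{\tilde g})>\lambda_{k-1}(L_{\tilde g})$ forces $k$ to be the bottom index of the cluster, i.e. $j=1$, so $\lambda_k(L_{g(t)})=\min_{1\le i\le m}\Lambda_i(t)$ and (\ref{OSD+1})--(\ref{OSD-1}) follow. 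In case (ii) the hypothesis $\lambda_k(L_{\tilde g})<\lambda_{k+1}(L_{\tilde g})$ places $k$ at the top of the cluster, $j=m$, so $\lambda_k(L_{g(t)})=\max_{1\le i\le m}\Lambda_i(t)$ and (\ref{OSD+2})--(\ref{OSD-2}) follow.

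It remains to compute $\Lambda_i'(0)$, and here I would invoke the conformal covariance (\ref{ConfTrans}). Writing $v_i(t):=\mu_t u_i(t)$, equation (\ref{RKC1}) becomes $L_{\tilde g}v_i(t)=\Lambda_i(t)\,\mu_t^{4/(n-2)}v_i(t)$ and the normalization (\ref{RKC2}) becomes $\int_M v_i(t)v_j(t)\,\mu_t^{4/(n-2)}\,dv_{\tilde g}=\delta_{ij}$; since $\mu_0\equiv1$ we have $v_i(0)=u_i:=u_i(0)$ and $\int_M u_i^2\,dv_{\tilde g}=1$. Testing the weak form of this equation against the fixed eigenfunction $u_i$, using the self-adjointness of $L_{\tilde g}$ and $L_{\tilde g}u_i=\lambda_{\ast}u_i$, and differentiating at $t=0$, the terms involving $\dot v_i(0)$ cancel and one is left with $0=\Lambda_i'(0)+\lambda_{\ast}\int_M \dot w_0\,u_i^2\,dv_{\tilde g}$, where $w_t=\mu_t^{4/(n-2)}$ and $\dot w_0=\frac{4}{n-2}h$; this is precisely (\ref{AnalyticEigen}). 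The main obstacle is the bookkeeping of the first two paragraphs: correctly tying $\lambda_k(L_{g(t)})$ to the analytic branches in the degenerate case and tracking the asymmetry between the left and right derivatives, which is exactly where the dichotomy (i)/(ii) originates. By contrast, the derivative formula is a direct consequence of the conformal law once the cancellation of the $\dot v_i(0)$ terms is observed.
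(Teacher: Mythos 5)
Your proposal is correct and follows essentially the same route as the paper: apply the Rellich--Kato--Canzani branches, use the spectral separation plus continuity to identify $\lambda_k(L_{g(t)})$ with the min (resp.\ max) of the branches in cases (i) and (ii), and derive (\ref{AnalyticEigen}) from the conformal covariance law by testing against $u_i$ and using self-adjointness to cancel the $\dot v_i(0)$ terms. Your order-statistic/total-preorder justification for the existence of the one-sided derivatives in the degenerate case is a slightly more explicit version of the paper's appeal to continuity and analyticity, but it is the same idea.
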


Notice that if we know the sign of $\lambda_k(L_g)$, then the one-sided derivatives can be rewritten in terms of formula (\ref{AnalyticEigen}). For instance, if $\lambda_k(L_{\tilde g})>0$ and $\lambda_k(L_{\tilde g}) > \lambda_{k-1}(L_{\tilde g})$, so that we are in case (i), then
\begin{equation}
\frac{d}{dt}\lambda_k(L_{g(t)})|_{t=0^+} = -\frac{4}{n-2} \lambda_k(L_{\tilde g})\max_{1\le i \le m} \int_M hu_i^2\;dv_{\tilde g},
\end{equation}
and
\begin{equation}
\frac{d}{dt}\lambda_k(L_{g(t)})|_{t=0^-} = -\frac{4}{n-2} \lambda_k(L_{\tilde g}) \min_{1\le i\le m} \int_M hu_i^2\;dv_{\tilde g}.
\end{equation}

\begin{proof}
Let us start by pointing out that the family $\{\Lambda_1(t),\cdots,\Lambda_m(t)\}$ is an unordered family of eigenvalues of $L_{g(t)}$. From Theorem \ref{RKC}, in some neighborhood of $t=0$, the spectrum of $L_{g(t)}$ consists only of $\{\Lambda_1(t),\cdots,\Lambda_m(t)\}$. The continuity of $t\mapsto \lambda_k(L_{g(t)})$ and the analyticity of $t\mapsto \Lambda_i(t)$ in a neighborhood of $t=0$ implies that there exists $a,b\in\{1,\cdots,m\}$ such that
\begin{equation} \label{OSD1}
\lambda_k(L_{g(t)}) = \begin{cases}
			      \Lambda_a(t) & \hspace{.2in} \text{if } t\in[0,\eta) \\
			      \Lambda_b(t) & \hspace{.2in} \text{if } t\in(-\eta,0].
			      \end{cases},
\end{equation}
for some $\eta>0$. This shows the existence of the one-sided derivatives of $\lambda_k(L_{g(t)})$ at $t=0$.

Assume now that $\lambda_k(L_{\tilde g}) > \lambda_{k-1}(L_{\tilde g})$. By choosing $d>0$ such that $\lambda_{k-1}(L_{\tilde g})\not\in(\lambda_k(L_{\tilde g}) - d , \lambda_k(L_{\tilde g}) + d)$, thanks to Theorem \ref{RKC} one can select a small enough $\eta>\delta >0$ such that Spect($L_{g(t)}$)$\cap(\lambda_k(L_{\tilde g}) - d , \lambda_k(L_{\tilde g}) + d) = \{\Lambda_1(t),\cdots,\Lambda_m(t)\}$ for $|t|<\delta$. This together with the continuity of $\lambda_{k-1}(L_{g(t)})$ implies that $\Lambda_i(t)>\lambda_{k-1}(L_{g(t)})$. Therefore, $\Lambda_i(t)\ge\lambda_k(L_{g(t)})$ and thus $\lambda_k(L_{g(t)}) = \min_{1\le i\le m}\{\Lambda_i(t)\}$ for $|t|<\delta$. 

We now use $(\ref{OSD1})$ to deduce $\lambda_k(L_{g(t)}) = \Lambda_a(t)\le\Lambda_i(t)$ for all $i=1,\cdots,m$ and $t\in [0,\delta)$, and $\lambda_k(L_{g(t)}) = \Lambda_b(t)\le\Lambda_i(t)$ for all $i=1,\cdots,m$ and $t\in (-\delta,0]$. Using $\lambda_k(L_{g(t)}) = \Lambda_a(t)\le\Lambda_i(t)$ with $t$ and $i$ as specified, one obtains
\begin{equation}
\frac{\Lambda_a(t) - \lambda_k(L_{\tilde g})}{t} \le \frac{\Lambda_i(t) - \lambda_k(L_{\tilde g})}{t}\text{ for $t\in(0,\delta)$} \Rightarrow \Lambda_a'(0)  \le  \Lambda_i'(0).
\end{equation}
Similarly, from $\lambda_k(L_{g(t)}) = \Lambda_b(t)\le \Lambda_i(t)$, we deduce
\begin{equation}
\frac{\Lambda_b(t) - \lambda_k(L_{\tilde g})}{t} \ge \frac{\Lambda_i(t) - \lambda_k(L_{\tilde g})}{t}\text{ for $t\in(-\delta,0)$} \Rightarrow \Lambda_b'(0)  \ge  \Lambda_i'(0).
\end{equation}
Putting all together we get
\begin{equation}
\frac{d}{dt}\lambda_k(L_{g(t)})|_{t=0^+} = \min_{1\le i \le m} \Lambda_i'(0) \text{ and }\frac{d}{dt}\lambda_k(L_{g(t)})|_{t=0^-} = \max_{1\le i \le m} \Lambda_i'(0),
\end{equation}
which is what we wanted to show. If $\lambda_k(L_{\tilde g}) < \lambda_{k+1}(L_{\tilde g})$, then a similar analysis leads to $\lambda_k(L_{g(t)}) = \max_{1\le i\le m}\{\Lambda_i(t)\}$. The proof for this case is similar and it is therefore omitted.

It remains to show (\ref{AnalyticEigen}). We use the eigenvalue equation (\ref{RKC1}), together with the transformation law (\ref{ConfTrans}):
\begin{equation}
L_{\tilde g} (\mu_tu_i(t)) = \Lambda_i(t)\mu_t^{\frac{n+2}{n-2}}u_i(t).
\end{equation}
Differentiating this equation with respect to $t$ and then setting $t=0$ gives us
\begin{equation}\label{OSD2}
L_{\tilde g}(hu_i) + L_{\tilde g}(u_i') = \Lambda_i'(0)u_i + \frac{n+2}{n-2} \lambda_k(L_{\tilde g})hu_i + \lambda_k(L_{\tilde g}) u_i',
\end{equation}
where $u_i=u_i(0)$ and $u_i'= \frac{d}{dt}u_i(t)|_{t=0}$. Also, by setting $t=0$ in (\ref{RKC1}) one obtains $L_{\tilde g} u_i = \lambda_k(L_{\tilde g})u_i$, and multiplying this by $u_i'$ gives us
\begin{equation}\label{OSD3}
u_i'L_{\tilde g}u_i = \lambda_k(L_{\tilde g}) u_i'u_i.
\end{equation} 
On the other hand, one can multiply (\ref{OSD2}) by $u_i$ to get
\begin{equation}\label{OSD4}
u_iL_{\tilde g}(h u_i) + u_iL_{\tilde g}u_i' = \Lambda_i'(0) u_i^2 + \frac{n+2}{n-2}\lambda_k(L_{\tilde g})hu_i^2 + \lambda_k(L_{\tilde g}) u_iu_i'. 
\end{equation}
After subtracting (\ref{OSD3}) from (\ref{OSD4}), and integrating with respect to $dv_{\tilde g}$ we obtain
\begin{equation}
\begin{split}
\int_Mu_iL_{\tilde g}(hu_i)\;dv_{\tilde g} + \underbrace{\int_M(u_iL_{\tilde g}u_i' - u_i'L_{\tilde g}u_i)\;dv_{\tilde g}}_{=0\text{ by self-adjointness of }L_{\tilde g}} &= \Lambda_i'(0)\underbrace{\int_M u_i^2\;dv_{\tilde g}}_{=1\text{ by }(1.2)} \\ &\hspace{.1in}+ \frac{n+2}{n-2}\lambda_k(L_{\tilde g})\int_Mhu_i^2\;dv_{\tilde g}\\
\int_M L_{\tilde g}(u_i)\cdot(hu_i)\;dv_{\tilde g} &= \Lambda_i'(0) + \frac{n+2}{n-2}\lambda_k(L_{\tilde g})\int_Mh u_i^2\;dv_{\tilde g} \\
\lambda_k(L_{\tilde g})\int_M hu_i^2\;dv_{\tilde g} &= \Lambda_i'(0) + \frac{n+2}{n-2}\lambda_k(L_{\tilde g})\int_Mh u_i^2\;dv_{\tilde g}.
\end{split}
\end{equation}
Hence
\begin{equation}
\Lambda_i'(0) = -\frac{4}{n-2}\lambda_k(L_{\tilde g})\int_M hu_i^2\;dv_{\tilde g}.
\end{equation}
This concludes the proof of $(i)$. The arguments for $(ii)$ are similar and therefore we omit them.
\end{proof}

\section{Extremal Metrics under Sire-Xu Normalization}\label{ExtremalMetrics}

We start with Corollary \ref{OSD-SX}, which is an immediate consequence of Theorem \ref{OSD}.

\begin{corollary}\label{OSD-SX}
Let $g(t) = \mu_t^{4/(n-2)}\tilde g = \mu_t^{4/(n-2)}\tilde \mu^{4/(n-2)}g$ be any analytic deformation of $\tilde g\in[g]$. Then the one-sided derivatives of $F^k_g(g(t))$ both exist. Let us further assume that the conformal factor of the perturbed metric satisfies
\begin{equation}\label{OSD-SX1}
\int_M \tilde \mu^{\frac{4}{n-2}}\;dv_g =1
\end{equation}
Then an explicit formula for the one-sided derivatives can be computed in certain cases:
\begin{enumerate}[(i)]
\item if $\lambda_k(L_{\tilde g})> \lambda_{k-1}(L_{\tilde g})$, then
\begin{equation}\label{OSD-SX2}
 \frac{d}{dt}F^k_g(g(t))|_{t=0^+} = \frac{4}{n-2}\lambda_k(L_{\tilde g})\int_M h \tilde \mu^{\frac{4}{n-2}}\;dv_g + \min_{1\le i\le m}\Lambda_i'(0)
\end{equation}
and
\begin{equation}\label{OSD-SX3}
 \frac{d}{dt}F^k_g(g(t))|_{t=0^-} = \frac{4}{n-2}\lambda_k(L_{\tilde g})\int_M h\tilde \mu^{\frac{4}{n-2}}\;dv_g + \max_{1\le i\le m} \Lambda_i'(0)
\end{equation}
If we select as generating function $\mu_t = e^{tw\tilde\mu^2}$, where $w$ has zero mean value with respect to $\tilde g$, then
\begin{equation}\label{OSD-SX4}
 \frac{d}{dt}F^k_g(g(t))|_{t=0^+} = \min_{1\le i\le m} \Lambda_i'(0),
\end{equation}
and
\begin{equation}\label{OSD-SX5}
 \frac{d}{dt}F^k_g(g(t))|_{t=0^-} = \max_{1\le i\le m} \Lambda_i'(0).
\end{equation}

\item if $\lambda_k(L_{\tilde g})< \lambda_{k+1}(L_{\tilde g})$, then
\begin{equation}\label{OSD-SX6}
 \frac{d}{dt}F^k_g(g(t))|_{t=0^+} = \frac{4}{n-2}\lambda_k(L_{\tilde g})\int_M h \tilde \mu^{\frac{4}{n-2}}\;dv_g + \max_{1\le i\le m}\Lambda_i'(0),
\end{equation}
and
\begin{equation}\label{OSD-SX7}
 \frac{d}{dt}F^k_g(g(t))|_{t=0^-} = \frac{4}{n-2}\lambda_k(L_{\tilde g})\int_M h\tilde \mu^{\frac{4}{n-2}}\;dv_g + \min_{1\le i\le m} \Lambda_i'(0). 
\end{equation}
If we select as generating function $\mu_t = e^{tw\tilde\mu^2}$, where $w\in L^2(M,\tilde g)$ has zero mean value with respect to $\tilde g$, then
\begin{equation}\label{OSD-SX8}
 \frac{d}{dt}F^k_g(g(t))|_{t=0^+} = \max_{1\le i\le m} \Lambda_i'(0)
\end{equation}
and
\begin{equation}\label{OSD-SX9}
 \frac{d}{dt}F^k_g(g(t))|_{t=0^-} = \min_{1\le i\le m} \Lambda_i'(0)
\end{equation}
\end{enumerate}
Here the notation is as in Theorem \ref{OSD}. 
\end{corollary}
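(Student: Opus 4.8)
The plan is to treat $F_g^k(g(t))$ as a product of two factors and apply the product rule, observing that only one factor carries a genuinely one-sided derivative. First I would rewrite the deformation relative to the fixed background metric: since $g(t) = \mu_t^{4/(n-2)}\tilde g$ and $\tilde g = \tilde\mu^{4/(n-2)}g$, we have $g(t) = (\mu_t\tilde\mu)^{4/(n-2)}g$, so the conformal factor of $g(t)$ with respect to $g$ is $\mu_t\tilde\mu$. Hence $F_g^k(g(t)) = A(t)B(t)$ with $A(t):=\lambda_k(L_{g(t)})$ and $B(t):=\int_M(\mu_t\tilde\mu)^{4/(n-2)}\,dv_g$.

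The factor $B(t)$ is real analytic in $t$ (the integrand depends analytically on $t$ and $M$ is compact), so it is two-sided differentiable; only $A(t)$ is merely one-sided differentiable, by Theorem \ref{OSD} applied to the deformation $g(t)$ of the \emph{base} metric $\tilde g$ with generating function $\mu_t$, so that $h=\frac{d}{dt}\mu_t|_{t=0}$. The product rule then yields existence of both one-sided derivatives, namely $\frac{d}{dt}F_g^k(g(t))|_{t=0^\pm} = A'(0^\pm)B(0) + A(0)B'(0)$. Using the normalization (\ref{OSD-SX1}) I would compute $B(0)=1$, and differentiating under the integral sign, $B'(0)=\frac{4}{n-2}\int_M h\,\tilde\mu^{4/(n-2)}\,dv_g$, while $A(0)=\lambda_k(L_{\tilde g})$. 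Inserting the formulas of Theorem \ref{OSD} for $A'(0^\pm)$ — the $\min/\max$ of $\Lambda_i'(0)$ according to whether we are in case (i) or (ii) — gives (\ref{OSD-SX2}), (\ref{OSD-SX3}), (\ref{OSD-SX6}) and (\ref{OSD-SX7}) at once.

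For the specialization to the generating function $\mu_t = e^{tw\tilde\mu^2}$, the key computation is $h = \frac{d}{dt}\mu_t|_{t=0} = w\tilde\mu^2$, so the first term $\frac{4}{n-2}\lambda_k(L_{\tilde g})\int_M h\,\tilde\mu^{4/(n-2)}\,dv_g$ carries the integrand $w\,\tilde\mu^2\,\tilde\mu^{4/(n-2)}$. The exponent bookkeeping is what makes everything collapse: since $2 + \frac{4}{n-2} = \frac{2n}{n-2}$ and $dv_{\tilde g} = \tilde\mu^{2n/(n-2)}\,dv_g$, this integrand equals $w\,dv_{\tilde g}$, so the first term is exactly $\frac{4}{n-2}\lambda_k(L_{\tilde g})\int_M w\,dv_{\tilde g}$, which vanishes by the zero-mean hypothesis on $w$. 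The one-sided derivatives of $F_g^k$ then reduce to $\min_i\Lambda_i'(0)$ and $\max_i\Lambda_i'(0)$, producing (\ref{OSD-SX4}), (\ref{OSD-SX5}), (\ref{OSD-SX8}) and (\ref{OSD-SX9}).

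There is no deep obstacle; the content is bookkeeping. The one point requiring care — which I regard as the crux — is recognizing that the density $h\,\tilde\mu^{4/(n-2)}\,dv_g$ appearing in $B'(0)$ is precisely $w\,dv_{\tilde g}$ for the chosen generating function, so that the zero-mean condition is exactly what annihilates the volume contribution; matching the powers $2$, $\frac{4}{n-2}$, and $\frac{2n}{n-2}$ is where a power slip could occur. I would also be careful to invoke Theorem \ref{OSD} with base metric $\tilde g$ rather than $g$, so that the eigenfunctions $u_i$ and the integrals defining $\Lambda_i'(0)$ are taken with respect to $dv_{\tilde g}$, consistently with the statement.
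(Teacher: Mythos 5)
Your proposal is correct and follows essentially the same route as the paper: write $F_g^k(g(t))$ as the product of $\lambda_k(L_{g(t)})$ and the analytic volume-type factor, apply the product rule together with Theorem \ref{OSD} and the normalization \eqref{OSD-SX1}, and observe that for $\mu_t=e^{tw\tilde\mu^2}$ the density $h\,\tilde\mu^{4/(n-2)}\,dv_g$ equals $w\,dv_{\tilde g}$ so the extra term vanishes by the zero-mean hypothesis. The exponent bookkeeping $2+\tfrac{4}{n-2}=\tfrac{2n}{n-2}$ that you flag as the crux is exactly right, and your writeup is in fact slightly more explicit than the paper's, which omits that final computation.
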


\begin{proof}
The proof follows directly from Theorem \ref{OSD}. Using 
\begin{equation}
F_g^k(g(t)) = \lambda_k(L_{g(t)})\int_M \mu_t^{\frac{4}{n-2}}\cdot \mu^{\frac{4}{n-2}}\;dv_g
\end{equation}
 we see that the one-sided derivatives of $F^k_g(g(t))$ both exist. Assuming $\lambda_k(L_{\tilde g})> \lambda_{k-1}(L_{\tilde g})$, we compute $\frac{d}{dt}F^k_g(g(t))|_{t=0^+}$ as follows: 
 
\begin{equation}
\begin{split}
\frac{d}{dt}F^k_g(g(t))|_{t=0^+} &= \frac{d}{dt}\lambda_k(L_{g(t)})|_{t=0^+}\cdot \int_M\mu_0^{\frac{4}{n-2}}\tilde \mu^{\frac{4}{n-2}}\;dv_g\\ & \hspace{.1in}+ \frac{4}{n-2}\lambda_k(L_{\tilde g}) \int_M \mu_0^{\frac{6-n}{n-2}}\tilde \mu^{\frac{4}{n-2}}h\;dv_g\\ &=\min_{1\le i\le m} \Lambda_i'(0)\cdot \underbrace{\int_M \tilde \mu^{\frac{4}{n-2}}\;dv_g}_{=1\text{ by assumption on }\tilde \mu} + \frac{4}{n-2} \lambda_k(L_{\tilde g}) \int_M h\tilde \mu^{\frac{4}{n-2}}\;dv_g,
\end{split}
\end{equation}
which is the desired expression. The computations for $\frac{d}{dt}F^k_g(g(t))|_{t=0^-}$, as well as for the case $\lambda_k(L_{\tilde g})<\lambda_{k+1}(L_{\tilde g})$, are similar and are therefore omitted. This concludes the proof.
\end{proof}

The previous result allows us to finally define what an extremal metric is.
\begin{definition}\label{Extremal}
The metric $\tilde g\in[g]$ is said to be \textit{extremal} for the functional $F^k_g$ if for any deformation $g(t)$ of $\tilde g$ which is analytic in a neighborhood of $t=0$, we have 
\begin{equation}
\frac{d}{dt}F^k_g(g(t))|_{t=0^+}\cdot \frac{d}{dt}F^k_g(g(t))|_{t=0^-}\le 0.
\end{equation}
\end{definition}

These extremal metrics do not only catch global maximums, but also local extremums, including minimums. How to distinguish these different types of extremal metrics is another important question. Note that solving the maximization problem for $\Lambda_k(M,g)$ is equivalent to finding $g_{\text{\tiny max}}\in[g]$ such that $F_g^k(g_{\text{\tiny max}}) = \Lambda_k(M,g)$. In particular, any maximal metric is extremal.

In Section \ref{Introduction}, we discuss how in certain cases the existence of maximal metrics for (\ref{VolumeNorm}) is associated with existence of harmonic maps into spheres. The connection between extremal eigenvalues and harmonic maps was found first in the case of eigenvalues for the Laplace-Beltrami operator under volume normalization. Here, if $g_e\in[g]$ is an extremal metric (defined analogously) for $\tilde g \in [g] \mapsto \lambda_k(-\Delta_{\tilde g}) \text{Vol}(M^n,\tilde g)^{2/n}$, then the Riemannian manifold $(M^n,g_e)$ admits a harmonic map with constant energy in a sphere (\cite{ElSoufi}). Existence of maximal metrics for the aforementioned functional, which are a particular case of extremal metrics, have only been proved in two dimensions (\cite{Nadirashvili, Nadirashvili2, Petrides, Petrides2}). We would like to point out that a map $U: (M^n,g) \rightarrow (\mathbb{S}^{p-1},g_r)$ is harmonic if $U=(u_1,\cdots,u_p)$ and each coordinate function is an eigenfunction associated to the same eigenvalue $\lambda_k(-\Delta_g)$. 

\begin{proof}[Proof of Theorem \ref{map-SX}]
Let us denote by $E_k(L_{g_e})$ the eigenspace corresponding to the $k$-th eigenvalue $\lambda_k(L_{g_e})$, and consider the subset $K\subset L^2(M^n,g_e)$ defined by 
\begin{equation}
K:=\{u^2\mu_e^2:\; u\in E_k(L_{g_e})\text{ and }\|u\|^2_{L^2(M,g_e)}=1\}.
\end{equation}
This subset is compact and lies in a finite dimensional subset of $L^2(M,g_e)$. By Caratheodory's Theorem for Convex Hulls one deduce that the convex hull of $K$,
\begin{equation}
\text{Conv}(K) = \left\{\sum_{\text{finite}} a_ju_j^2\mu_e^2: a_j\ge 0, \sum a_j = 1, u_j\in E_k(L_{g_e}),\|u_j\|^2_{L^2(M,g_e)}=1\right\},
\end{equation}
is compact as well. 

The first step of the proof is to show that $1\in \text{Conv}(K)$. On the contrary, let us assume that $\{1\}$ and $\text{Conv}(K)$ have empty intersection. Since they are both convex and nonempty, the former one closed and the latter one compact, Hahn Banach Separation Theorem gives us the existence of a functional $\Phi \in (L^2(M,g_e))^*$ separating $\{1\}$ from $\text{Conv}(K)$:  
\begin{equation}\label{map1}
\Phi(1) >0
\end{equation}
and
\begin{equation}\label{map2}
\Phi(\varphi) \le 0,\;\;\forall \varphi\in\text{Conv}(K).
\end{equation}
Furthermore, Riesz's Representation Theorem provides us with the existence of a function $f\in L^2(M,g_e)$ such that $\Phi$ is defined by integrating against $f\;dv_{g_e}$. This allow us to rewrite $(\ref{map1})$ and $(\ref{map2})$ as 
\begin{equation}\label{map3}
\int_M f\;dv_{g_e} >0
\end{equation}
and
\begin{equation}\label{map4}
\int_M\varphi f\;dv_{g_e} \le 0,\;\;\forall \varphi\in\text{Conv}(K).
\end{equation}

We let $w = f - \int_M f\;dv_{g_e}$ be the zero mean value part of $f$, and consider the analytic deformation $g(t)$ of $g_e$ generated by the function $\mu_t = e^{tw\mu_e^2}$. As before, we use $h$ to denote $\frac{d}{dt}\mu_t|_{t=0} = w\mu_e^2$.  From $(\ref{map3})$ and $(\ref{map4})$ one gets that for all $u\in E_k(L_{g_e})$,
\begin{equation}
\int_M hu^2\;dv_{g_e} =\underbrace{\int_M u^2\mu_e^2f\;dv_{g_e}}_{\le 0} - \underbrace{\left(\int_Mf\;dv_{g_e}\right)}_{>0}\underbrace{\left(\int_M u^2\mu_e^2\;dv_{g_e}\right)}_{>0}.
\end{equation}\label{map5}
This implies that the quadratic form
\begin{equation}
-\frac{4}{n-2}\lambda_k(L_{g_e})\int_M hu^2\;dv_{g_e}
\end{equation}
has a constant sign on $E_k(L_{g_e})$. 

Let $m$ be the multiplicity of $\lambda_k(L_{g_e})$, and denote by $\{u_i\}_{i=1}^m$ the orthonormal basis arising from the generating function $\mu_t$ as in Theorems \ref{RKC} and \ref{OSD}. Then
\begin{equation}\label{map6}
\Lambda_i'(0) = - \frac{4}{n-2}\lambda_k(L_{g_e})\int_M hu_i^2\;dv_{g_e}
\end{equation}
has constant sign for all $i=1,\cdots,m$. By means of Corollary  \ref{OSD-SX} we deduce that the one-sided derivatives of $F_g^k(\mu_t^{4/(n-2)}g_e)$ at $t=0$ are both of the same sign. This contradicts the extremality assumption on $g_e$. Hence, $1\in\text{Conv}(K)$, i.e. there exists a finite collection $u_1,\cdots, u_p\in E_k(L_{g_e})$ such that $\sum_{i=1}^p u_i^2\mu_e^2 \equiv 1$. 

We now show that the equation $(\ref{ForEigen})$ holds. To this end, we apply the Laplace-Beltrami operator to the relation $\sum_{i=1}^pu_i^2\equiv \mu_e^{-2}$:

\begin{equation}
\begin{split}
-\frac{1}{2}\Delta_{g_e}(\mu_e^{-2})& =-\frac{1}{2}\Delta_{g_e}\left(\sum_{i=1}^p u_i^2\right) = -\sum_{i=1}^p\left(u_i\Delta_{g_e}u_i + |\nabla_{g_e}u_i|_{g_e}^2\right) \\ & = \sum_{i=1}^p\left(u_iL_{g_e}u_i  - |\nabla_{g_e}u_i|_{g_e}^2 - c_nR_{g_e}u_i^2\right) \\ & = \lambda_k(L_{g_e})\mu_e^{-2} - \sum_{i=1}^p |\nabla_{g_e}u_i|_{g_e}^2 - c_nR_{g_e}\mu_e^{-2}.
\end{split}
\end{equation}
The result is obtained after solving for $\lambda_k(L_{g_e})$.

Finally, if the extremal metric $g_e$ coincides with the background metric $g$, then $\mu_e=1$. Therefore, from (\ref{ForEigen}),
\begin{equation}\label{ForEigen2}
\lambda_k(L_g) = c_nR_g + \sum_{i=1}^p|\nabla_g u_i|^2.
\end{equation}
This gives us
\begin{equation}
\begin{split}
L_gu_j = \lambda_k(L_g) u_j & \iff (-\Delta_g +c_nR_g)u_j = \left(c_nR_g + \sum_{i=1}^p|\nabla_g u_i|^2\right)u_j \\ &\iff -\Delta_gu_j = \left(\sum_{i=1}^p|\nabla_gu_i|^2\right)u_j,
\end{split} 
\end{equation}
which is the harmonic map equation for maps into spheres (\cite{Helein}). This finishes the proof.
\end{proof}
We remark that equation (\ref{ForEigen2}) implies
\begin{equation}
\lambda_k(L_g)\ge c_nR_g.
\end{equation}
Therefore, if $\lambda_k(L_g)<0$, then $R_g<0$ everywhere on $M^n$.

\subsection{${\bf \Lambda_1(M,g)}$ for the case ${\bf Y(M,[g])>0}$}

We are given a closed n-dimensional Riemannian manifold $M$ endowed with a conformal class $[g]$ satisfying $Y(M,[g])>0$. We focus now on $\lambda_1(L_{\tilde g})$, where $\tilde g \in [g]$. It is well known that the multiplicity of $\lambda_1(L_{\tilde g})$ is always $1$. 

Proposition \ref{nec1-SX-P} implies that the maximization problem ``finding $\tilde g \in [g]$ such that $F^1_g(\tilde g) = \Lambda_1(M,g)$" would not have a solution if the scalar curvature $R_g$ with respect to the reference metric $g$ does not satisfies $R_g > 0$ everywhere on $M^n$. That is, $R_g>0$ is necessary. Finding such a reference metric within a conformal class is always possible as long as $Y(M,[g])>0$, which we are assuming.


\begin{proof}
Since $\lambda_1(L_{g_e})$ has multiplicity one, it satisfies $\lambda_1(L_{g_e})< \lambda_2(L_{g_e})$, and so Theorem \ref{map-SX} gives us that $u^2 = \mu_e^{-2}$, where $u$ is an eigenfunction associated to $\lambda_1(L_{g_e})$. Therefore $u=\pm \mu_e^{-1}$, and 
\begin{equation}
\lambda_1(L_{g_e})(\pm\mu_e^{-1}) = \lambda_1(L_{g_e})u = L_{g_e}(u) = L_{g_e}(\pm\mu_e^{-1}).
\end{equation}
Using the transformation law (\ref{ConfTrans}), we get
\begin{equation}
\lambda_1(L_{g_e})(\mu_e^{-1}) = \mu_e^{-\frac{n+2}{n-2}}L_{g}(\mu_e\mu_e^{-1}) =\mu_e^{-\frac{n+2}{n-2}}L_{g}(1) = \mu_e^{-\frac{n+2}{n-2}}(c_nR_g)
\end{equation}
The result is obtained after solving for $c_nR_g$.
\end{proof}

We remark that Proposition \ref{nec1-SX-P} follows directly from Corollary \ref{OSD-SX} and the simplicity of $\lambda_1(L_{g_e})$. Indeed, since $E_1(L_{g_e})$ is generated by one function, say $u$, formulas $(\ref{OSD-SX6})$ and $(\ref{OSD-SX7})$ are equal to each other. That is, the functional $F_g^1$ is differentiable at $t=0$ along any analytic deformation $g(t)$ of $g_e$. Then the extremal condition on $g_e$ implies that for any $h\in C^\infty(M)$,
\begin{equation}
\frac{4}{n-2}\lambda_1(L_{g_e})\left\{\int_Mh\mu_e^{\frac{4}{n-2}}\;dv_g - \int_M hu^2dv_{g_e}\;\right\} = 0.
\end{equation} 
This is equivalent to 
\begin{equation}
\int_Mh\left(\mu_e^{-2} - u^2\right)\;dv_{g_e} = 0.
\end{equation}
Since $h$ is arbitrary, we deduce that $\mu_e^{-2}=u^2$, and the result follows in a similar manner. 



In terms of maximal metrics, the condition $R_g>0$ everywhere on $M^n$ is also sufficient for existence of conformal metrics achieving $\Lambda_1(M^n,g)$. This is the result stated in Theorem \ref{Existence1-SX-P}.

\begin{proof}[Proof of Theorem \ref{Existence1-SX-P}]
Let us start by noticing that $\int_M\mu_{\text{\tiny max}}^{4/(n-2)}\;dv_g = 1$. We first show that $c_n\int_MR_g\;dv_g$ is an eigenvalue for $L_{g_{\text{\tiny max}}}$ with eigenfunction $\mu_{\text{\tiny max}}^{-1}$, and, moreover, $\lambda_1(L_{g_{\text{\tiny max}}}) = c_n\int_MR_g\;dv_g$. This is a consequence of the conformal properties of the conformal laplacian:
\begin{equation}
\begin{split}
L_{g_{\text{\tiny max}}}(\mu_{\text{\tiny max}}^{-1}) & = \mu_{\text{\tiny max}}^{-\frac{n+2}{n-2}}L_g(1) = \mu_{\text{\tiny max}}^{-\frac{n+2}{n-2}}c_nR_g\\& = \mu_{\text{\tiny max}}^{-\frac{n+2}{n-2}}c_n\cdot\mu_{\text{\tiny max}}^{\frac{4}{n-2}} \int_MR_g\;dv_g = \left(c_n\int_MR_g\;dv_g\right)\mu_{\text{\tiny max}}^{-1}.
\end{split}
\end{equation}
To argue that $c_n\int_MR_g\;dv_g$ is, in fact, the smallest eigenvalue of $L_{g_{\text{\tiny max}}}$, i.e. $\lambda_1(L_{g_{\text{\tiny max}}})= c_n\int_MR_g\;dv_g$, we observe that the eigenfunction $\mu_{\text{\tiny max}}^{-1}$ has constant sign.

It remains to show that $F^1_g(g_{\text{\tiny max}}) = \Lambda_1(M,g)$. To this end, pick an arbitrary metric $\tilde g = \tilde \mu^{4/(n-2)}g\in[g]$. By the variational characterization of $\lambda_1({L_{\tilde g}})$ with $u=\tilde \mu^{-1}$ as test function, we get
\begin{equation}
\begin{split}
\lambda_1(L_{\tilde g})\cdot \int_M\tilde\mu^{\frac{4}{n-2}}\;dv_g &\le \frac{\displaystyle\int_M \tilde \mu^{-1}L_{\tilde g}(\tilde \mu^{-1})\;dv_{\tilde g} }{\displaystyle \int_M \tilde \mu^{-2}\;dv_{\tilde g}}\cdot \int_M\tilde\mu^{\frac{4}{n-2}}\;dv_g\\&= \frac{\displaystyle\int_M \tilde \mu^{-1}\cdot \tilde \mu\; L_{g}(\tilde\mu\cdot\tilde \mu^{-1})\;dv_{g} }{\displaystyle \int_M \tilde \mu^{-2}\tilde\mu^{\frac{2n}{n-2}}\;dv_{g}}\cdot \int_M\tilde\mu^{\frac{4}{n-2}}\;dv_g\\&=\frac{\displaystyle\int_M L_g(1)\;dv_{g} }{\displaystyle \int_M \tilde\mu^{\frac{4}{n-2}}\;dv_{g}}\cdot \int_M\tilde\mu^{\frac{4}{n-2}}\;dv_g = c_n\int_M R_g\;dv_g = \lambda_1(L_{g_{\text{\tiny max}}}).
\end{split}
\end{equation}
This finishes the proof.
\end{proof}

As a particular case of the previous theorem, we get Sire-Xu Theorem on the existence of maximal metrics for $\Lambda_1(\mathbb{S}^n,g_r)$, where $g_r$ is the standard round metric on the sphere $\mathbb{S}^n$.

\begin{theorem}[Sire-Xu]{\label{Sire-Xu}}
For the standard round sphere $(\mathbb{S}^n,g_r)$, the only maximizer for $\Lambda_1(\mathbb{S}^n,g_r)$, up to scalings, is the standard round metric $g_r$.
\end{theorem}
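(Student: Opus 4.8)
The plan is to obtain Theorem \ref{Sire-Xu} as a direct specialization of Theorem \ref{Existence1-SX-P}, combined with the necessary condition in Corollary \ref{nec1-SX-P}, the crucial input being that the round metric has constant scalar curvature. For existence I would simply apply Theorem \ref{Existence1-SX-P} with background metric $g=g_r$. Since $Y(\mathbb{S}^n,[g_r])>0$ and $R_{g_r}=n(n-1)$ is a positive constant, the theorem yields the maximizer
\begin{equation}
g_{\text{\tiny max}} = \left(\frac{R_{g_r}}{\int_{\mathbb{S}^n} R_{g_r}\,dv_{g_r}}\right)g_r = \frac{1}{\text{Vol}(\mathbb{S}^n,g_r)}\,g_r,
\end{equation}
which is manifestly a constant rescaling of $g_r$. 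This already shows that a scaling of the round metric solves the maximization problem for $\Lambda_1(\mathbb{S}^n,g_r)$.

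The substance of the theorem is the uniqueness clause, and here I would proceed in two moves. First I would record the scale-invariance of $F^1_{g_r}$: for $c>0$ the rescaling $\tilde g \mapsto c\tilde g$ corresponds to replacing $\mu$ by $c^{(n-2)/4}\mu$, so the weight $\int_{\mathbb{S}^n}\mu^{4/(n-2)}\,dv_{g_r}$ scales by $c$ while $\lambda_1(L_{c\tilde g})=c^{-1}\lambda_1(L_{\tilde g})$, leaving $F^1_{g_r}$ unchanged. Consequently any maximizer may be normalized to satisfy $\int_{\mathbb{S}^n}\mu^{4/(n-2)}\,dv_{g_r}=1$ without affecting its being a maximizer.

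Second, let $g_e=\mu_e^{4/(n-2)}g_r$ be any such normalized maximizer. Since every maximal metric is extremal (as observed after Definition \ref{Extremal}), Corollary \ref{nec1-SX-P} applies and gives
\begin{equation}
c_n R_{g_r} = \lambda_1(L_{g_e})\,\mu_e^{\frac{4}{n-2}}.
\end{equation}
Because $R_{g_r}$ is constant on $\mathbb{S}^n$, the left-hand side is constant, and since $\lambda_1(L_{g_e})\neq 0$ this forces $\mu_e^{4/(n-2)}$ to be constant as well. The normalization $\int_{\mathbb{S}^n}\mu_e^{4/(n-2)}\,dv_{g_r}=1$ then pins it down to $\mu_e^{4/(n-2)}\equiv 1/\text{Vol}(\mathbb{S}^n,g_r)$, so that $g_e$ is a constant multiple of $g_r$. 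Hence, up to scalings, $g_r$ is the unique maximizer.

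I do not expect a genuine obstacle here, since all the heavy lifting is already contained in Theorem \ref{Existence1-SX-P} and Corollary \ref{nec1-SX-P}; the two points requiring care are the scale-invariance bookkeeping that legitimizes normalizing an arbitrary maximizer before invoking the corollary, and the key observation that the constancy of $R_{g_r}$ is precisely what collapses the conformal factor to a constant. Regularity is not an issue, as Corollary \ref{nec1-SX-P} already presumes a smooth extremal metric, and the hypotheses $Y(\mathbb{S}^n,[g_r])>0$ and $R_{g_r}>0$ place us squarely within the scope of both results.
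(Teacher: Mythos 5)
Your proposal is correct and follows essentially the same route as the paper: existence via Theorem \ref{Existence1-SX-P} applied to $g_r$, and uniqueness via the necessary condition $c_nR_{g_r}=\lambda_1(L_{g_e})\mu_e^{4/(n-2)}$ from Corollary \ref{nec1-SX-P}, with constancy of $R_{g_r}$ forcing $\mu_e$ to be constant. Your explicit scale-invariance bookkeeping, which justifies normalizing an arbitrary maximizer before invoking the corollary, is a detail the paper leaves implicit but is a welcome addition.
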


\begin{proof}
It is well known that $R_{g_r}=n(n-1)$. From Theorem \ref{Existence1-SX-P} we get that $g_{\text{\tiny max}}= \omega_n^{-1}g_r$ solves the maximization problem for $F^1_{g_r}$. Here $\omega_n$ denotes the volume of $\mathbb{S}^n$ with respect to $g_r$. If $\tilde g = \tilde\mu^{4/(n-2)}g_r\in[g_r]$ maximizes $F^1_{g_r}$ , i.e. if $F^1_{g_r}(\tilde g) = \Lambda_1(\mathbb{S}^n,g_r)$, then it has to satisfies equation $(\ref{nec1-SX-P1})$. This implies that the conformal factor $\tilde g$ is a constant function. The proof is now completed.
\end{proof}

We end this section with a comment on the uniqueness of maximal metrics for $\lambda_1(L_{.})$. If we have a maximal metric for $F_g^1$ and the scalar curvature of the reference metric $g$ is constant, then the conformal factor of the maximal metric is constant due to (\ref{nec1-SX-P1}). This is the scenario in Theorem \ref{Sire-Xu}.

\subsection{${\bf \Lambda_1(M,g)}$ for the case ${\bf Y(M^n,[g])<0}$}

In this case, $\lambda_1(L_{\tilde g})$ is strictly negative for any $\tilde g\in [g]$, and its multiplicity is still 1. Formula (\ref{nec1-SX-P1}) in Proposition \ref{nec1-SX-P} remains valid in this case. For the sake of completeness, we state the result but  omit the proof as it is the same argument.

\begin{corollary}\label{nec1-SX-N}
Suppose $[g]$ is a conformal class for which $Y(M^n,[g])<0$, and assume that $g_e=\mu_e^{4/(n-2)}g$ is an extremal metric for $F^1_g$ with $\int_M\mu_e^{4/(n-2)}\;dv_g = 1$. Then  
\begin{equation}
 c_nR_g = \lambda_1(L_{g_e})\mu_e^{\frac{4}{n-2}}.
\end{equation}
In particular, the scalar curvature $R_g$ with respect to the background metric $g$ is negative everywhere.
\end{corollary}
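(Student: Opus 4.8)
The plan is to run the same argument used for Corollary \ref{nec1-SX-P}, changing only the sign bookkeeping at the very last step. The structural fact that makes everything go through is that when $Y(M^n,[g])<0$ the first eigenvalue $\lambda_1(L_{\tilde g})$ is strictly negative for every $\tilde g\in[g]$ and, as always, simple; in particular $\lambda_1(L_{g_e})\neq 0$ and the multiplicity of $\lambda_1(L_{g_e})$ equals one.

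First I would check that the hypotheses of Theorem \ref{map-SX} are satisfied with $k=1$. Simplicity gives $\lambda_1(L_{g_e})<\lambda_2(L_{g_e})$, so the second alternative in (\ref{tech}) holds automatically, while strict negativity gives $\lambda_1(L_{g_e})\neq 0$. Theorem \ref{map-SX} then produces a finite family of eigenfunctions whose $\mu_e^2$-weighted squares sum to $1$; but since $E_1(L_{g_e})$ is one-dimensional this collapses to a single eigenfunction $u$ with $u^2\mu_e^2\equiv 1$, hence $u=\pm\mu_e^{-1}$.

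Next I would substitute $u=\pm\mu_e^{-1}$ into the eigenvalue equation $L_{g_e}u=\lambda_1(L_{g_e})u$ and apply the conformal transformation law (\ref{ConfTrans}) with generating function $\mu_e$, exactly as in the positive case, writing $L_{g_e}(\mu_e^{-1})=\mu_e^{-(n+2)/(n-2)}L_g(\mu_e\cdot\mu_e^{-1})=\mu_e^{-(n+2)/(n-2)}L_g(1)=\mu_e^{-(n+2)/(n-2)}c_nR_g$. Solving for $c_nR_g$ and simplifying the exponent $\frac{n+2}{n-2}-1=\frac{4}{n-2}$ yields the asserted identity $c_nR_g=\lambda_1(L_{g_e})\mu_e^{4/(n-2)}$.

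The negative Yamabe hypothesis enters only in the final sign conclusion, which is the sole difference from Corollary \ref{nec1-SX-P}: since $c_n>0$ and $\mu_e^{4/(n-2)}>0$, the identity forces the sign of $R_g$ to agree with that of $\lambda_1(L_{g_e})$, which is negative, so $R_g<0$ everywhere. I expect no genuine obstacle here beyond recording strict negativity and simplicity of $\lambda_1$; as an alternative route one could bypass Theorem \ref{map-SX} and argue directly from Corollary \ref{OSD-SX}, noting that simplicity makes $F_g^1$ two-sided differentiable, so the extremal condition forces $\int_M h\,(\mu_e^{-2}-u^2)\,dv_{g_e}=0$ for every $h$, whence $u^2=\mu_e^{-2}$ and the same computation applies.
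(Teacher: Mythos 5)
Your proposal is correct and follows essentially the same route as the paper, which itself omits the proof of Corollary \ref{nec1-SX-N} precisely because it is word-for-word the argument of Corollary \ref{nec1-SX-P}: simplicity of $\lambda_1$ plus Theorem \ref{map-SX} gives $u=\pm\mu_e^{-1}$, the conformal transformation law yields $c_nR_g=\lambda_1(L_{g_e})\mu_e^{4/(n-2)}$, and the sign of $\lambda_1(L_{g_e})$ determines the sign of $R_g$. Your alternative route via Corollary \ref{OSD-SX} is also the one the paper records as a remark after Corollary \ref{nec1-SX-P}.
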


As before, this put a restriction on the choice of background metric. In order to solve the maximization problem for $\Lambda_1(M,g)$ in this case, $R_g<0$ is a necessary condition. The proof of the following theorem is exactly as that of Theorem \ref{Existence1-SX-P}, and thus the majority of the arguments are omitted.

\begin{theorem}\label{Existence1-SX-N}
Let M be a closed n-dimensional manifold endowed with a conformal class [g] satisfying $Y(M^n,[g])<0$. If $R_g<0$, then the metric 
\begin{equation}g_{\text{\tiny max}} := \mu_{\text{\tiny max}}^{\frac{4}{n-2}}g = \left(\frac{R_g}{\int_M R_g\;dv_g}\right)g
\end{equation} 
solves the maximization problem for $\Lambda_1(M,g)$, i.e. $F_g^1(g_{\text{\tiny max}}) = \Lambda_1(M,g)$. In particular, $\Lambda_1(M,g)<0$.
\end{theorem}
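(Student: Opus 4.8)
The plan is to run the proof of Theorem~\ref{Existence1-SX-P} \emph{verbatim}, tracking only the sign bookkeeping forced by $R_g<0$. First I would check that $g_{\text{\tiny max}}$ is a genuine conformal metric carrying the right normalization: since $R_g<0$ everywhere we also have $\int_M R_g\,dv_g<0$, so the ratio $R_g/\int_M R_g\,dv_g$ is a \emph{strictly positive} function and is legitimately of the form $\mu_{\text{\tiny max}}^{4/(n-2)}$; integrating it over $M$ yields $\int_M \mu_{\text{\tiny max}}^{4/(n-2)}\,dv_g=1$.

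Next I would identify $\lambda_1(L_{g_{\text{\tiny max}}})$ explicitly. As in the positive case, applying the transformation law~(\ref{ConfTrans}) to the constant function $1$ and substituting $R_g=\mu_{\text{\tiny max}}^{4/(n-2)}\int_M R_g\,dv_g$ gives
\begin{equation}
L_{g_{\text{\tiny max}}}(\mu_{\text{\tiny max}}^{-1}) = \mu_{\text{\tiny max}}^{-\frac{n+2}{n-2}}\,c_nR_g = \left(c_n\int_M R_g\,dv_g\right)\mu_{\text{\tiny max}}^{-1}.
\end{equation}
Hence $c_n\int_M R_g\,dv_g$ is an eigenvalue of $L_{g_{\text{\tiny max}}}$ with eigenfunction $\mu_{\text{\tiny max}}^{-1}$, and since $\mu_{\text{\tiny max}}^{-1}$ has constant sign while only the lowest eigenvalue admits a sign-definite eigenfunction, I conclude $\lambda_1(L_{g_{\text{\tiny max}}})=c_n\int_M R_g\,dv_g$. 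Note this value is negative, as it must be under $Y(M^n,[g])<0$.

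The third step is the variational comparison. For an arbitrary $\tilde g=\tilde\mu^{4/(n-2)}g\in[g]$ I would insert $u=\tilde\mu^{-1}$ into the Rayleigh characterization $\lambda_1(L_{\tilde g})=\inf_{u\ne0}\big(\int_M uL_{\tilde g}u\,dv_{\tilde g}\big)/\big(\int_M u^2\,dv_{\tilde g}\big)$. The only point demanding attention is that multiplying the resulting inequality by the strictly positive factor $\int_M\tilde\mu^{4/(n-2)}\,dv_g$ preserves its direction even though now $\lambda_1(L_{\tilde g})<0$. The same (sign-independent) cancellation used in Theorem~\ref{Existence1-SX-P} then reduces the right-hand side to $c_n\int_M R_g\,dv_g$, so that $F_g^1(\tilde g)\le c_n\int_M R_g\,dv_g=F_g^1(g_{\text{\tiny max}})$ for every $\tilde g$. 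Taking the supremum yields $\Lambda_1(M,g)=c_n\int_M R_g\,dv_g<0$, attained at $g_{\text{\tiny max}}$.

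Because every displayed computation coincides with the positive-curvature case, the hard part is not analytic but purely a matter of sign control: one must confirm that the conformal factor stays positive, that a sign-definite eigenfunction still pins down the bottom of the spectrum when that bottom lies below zero, and that the Rayleigh inequality is not reversed when multiplied by the positive integral $\int_M\tilde\mu^{4/(n-2)}\,dv_g$. Once these three checks are in place, the argument of Theorem~\ref{Existence1-SX-P} applies without further modification.
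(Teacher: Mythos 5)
Your proposal is correct and follows exactly the route the paper takes: the paper's own proof of Theorem \ref{Existence1-SX-N} simply observes that $\mu_{\text{\tiny max}}$ is well defined because $R_g<0$ everywhere and then invokes the argument of Theorem \ref{Existence1-SX-P} verbatim, which is precisely what you do, with the sign checks (positivity of the conformal factor, the sign-definite eigenfunction identifying the bottom of the spectrum, and the direction of the Rayleigh inequality) made explicit. No gaps.
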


\begin{proof}
Note that since $R_g$ is negative everywhere the conformal factor $\mu_{\text{\tiny max}}$ is well defined. As in Theorem \ref{Existence1-SX-P}, we get $F^1_g(g_{\text{\tiny max}}) = c_n\int_MR_g\;dv_g = \Lambda_1(M,g)$, and therefore $\Lambda_1(M,g)$ is strictly negative. 
\end{proof}



\begin{thebibliography}{9}

\bibitem{Ammann2} B. Ammann and P. Jammes, The supremum of conformally covariant eigenvalues in a conformal class, in \textit{Variational Problems in Differential Geometry}, London Mathematical Society Lecture Note Series, vol. 394, Cambridge (2011), 1-23.

\bibitem{Bando} S. Bando and H. Urakawa, Generic properties of the eigenvalue of Laplacian for com- pact Riemannian manifolds, Tôhoku Math. J., {\bf 35} (1983), 155-172.

\bibitem{Canzani} Y. Canzani, On the multiplicity of eigenvalues of conformally covariant operators, \textit{Annales de I'Institut Fourier}, 2014.

\bibitem{ElSoufi}
A. El Soufi and S. Ilias, Laplacian eigenvalue functionals and metric deformations on compact manifolds, \textit{Journal of Geometry and Physics}. {\bf 58} (2008), no.1, 89–-104.

\bibitem{ElSoufi2} A. El Soufi and S. Ilias, Riemannian manifolds admitting isometric immersions by their first eigenfunctions, \textit{Pacific Journal Of Mathematics}, {\bf 195} (2000), 91–-99. 

\bibitem{Gursky} M.J. Gursky and S. P\'erez-Ayala, Variational properties of the second eigenvalue of the Conformal Laplacian, arXiv:2010.13210 [math.DG], 2020.

\bibitem{Helein} F. H\'{e}lein and J.C. Woods, Harmonic Maps, in \textit{Handbook of Global Analysis}, Elsevier (2008), p. 417-491.

\bibitem{Kazdan}
J. Kazdan and F. Warner, Scalar curvature and conformal deformations of Riemannian structure, \textit{Journal of Differential Geometry} {\bf 10} (1975), 113--134.

\bibitem{Nadirashvili} N. Nadirashvili and Y. Sire, Conformal spectrum and harmonic maps, \textit{Moscow Mathematical Journal}, 15(1):123–140, 2015.

\bibitem{Nadirashvili2} N. Nadirashvili and Y. Sire, Maximization of higher order eigenvalues and applications, \textit{Moscow Mathematical Journal}, 15(4):767–775, 2015.

\bibitem{Petrides} R. Petrides, Existence and regularity of maximal metrics for the first Laplace eigenvalue on surfaces, \textit{Geometric and Functional Analysis} 24(4):1336–1376, 2014.

\bibitem{Petrides2} R. Petrides, On the existence of metrics that maximize Laplace eigenvalue on surfaces, \textit{International Mathematics Research Notices}, 2018(14): 4261-4355, 2018

\bibitem{Sire} Y. Sire and H. Xu, Upper bounds for eigenvalues of conformal laplacian on spheres, arXiv:1809.06874v1 [math.DG] (2018)


\end{thebibliography}
\end{document}